\title{$(2,2)$-colourings and clique-free $\sigma$-hypergraphs}
\date{}
\begin{document}
\newtheorem{theorem}{Theorem}[section]
\newtheorem{definition}{Definition}[section]
\newtheorem{proposition}[theorem]{Proposition}
\newtheorem{corollary}[theorem]{Corollary}
\newtheorem{lemma}[theorem]{Lemma}

\author{Yair Caro \\ Department of Mathematics\\ University of Haifa-Oranim \\ Israel \and Josef  Lauri\\ Department of Mathematics \\ University of Malta
\\ Malta \and Christina Zarb \\Department of Mathematics \\University of Malta \\Malta }

\maketitle

\begin{abstract}
We consider vertex colourings of $r$-uniform hypergraphs $H$ in the classical sense, that is such that no edge has all its vertices given the same colour, and $(2,2)$-colourings of $H$ in which the vertices in any edge are given exactly two colours.  This is a special case of constrained colourings introduced by Bujtas and Tuza which, in turn, is a generalisation of Voloshin's colourings of mixed hypergraphs.  We study, $\chi(H)$, the classical chromatic number, and the $(2,2)$-spectrum of $H$, that is, the set of integers $k$ for which $H$ has a $(2,2)$-colouring using exactly $k$ colours.

We present extensions of hypergraphs which preserve both the chromatic number and the $(2,2)$-spectrum and which, however often repeated, do not increase the clique number of $H$ by more than a fixed number. In particular, we present sparse $(2,2)$-colourable clique-free $\sigma$-hypergraphs having arbitrarily large chromatic number - these $r$-uniform hypergraphs were studied by the authors in earlier papers.   We use these ideas to extend some known $3$-uniform hypergraphs which exhibit a $(2,2)$-spectrum with remarkable gaps.  We believe that this work is the first to present an extension of hypergraphs which preserves both $\chi(H)$ and the $(2,2)$-spectrum of $H$ simultaneously.
\end{abstract}
\section{Introduction}

Let $V=\{v_1,v_2,...,v_N\}$ be a finite set, and let $E=\{E_1,E_2,...,E_m\}$ be a family of subsets of $X$.  The pair $H=(X,E)$ is called a \emph{hypergraph} with vertex- set $V(H)=V$, and with edge-set $E(H)=E$.  When all the subsets are of the same size $r$, we say that $H$ is an \emph{r-uniform hypergraph}, and if all possible such subsets of size $r$ are taken to be edges, $H$ is said to be a \emph{complete $r$-uniform hypergraph}, or a \emph{clique}.  A clique on $N$ vertices is also referred to as an $N$-clique.  The maximum number of vertices of a clique in $H$ is called the \emph{clique number} and is denoted by $\omega(H)$.  A clique in $H$ is therefore a subset of $V(H)$ which forms a complete $r$-uniform subhypergraph in $H$.  

A \emph{constrained colouring}, or $(\alpha,\beta)$-colouring of a hypergraph $H$, is an assignment of colours to its vertices such that no edge of $H$ contains less than $\alpha$ or more than $\beta$ vertices with different colours; this notion was first introduced in \cite{bujtastuz09}, and studied further in a number of publications.   It can be considered as an extension of Voloshin colourings, also called mixed hypergraphs - the book \cite{voloshin02} and the up-to-date website \url{http://http://spectrum.troy.edu/voloshin/publishe.html} are recommended sources for literature on all these types of colourings of hypergraphs.   An $(\alpha,\beta)$-colouring which uses exactly $k$ colours is said to be a $k$-$(\alpha,\beta)$-colouring. The \emph{lower chromatic number} $\chi_{\alpha,\beta}$ of $H$ is defined as the least number $k$ for which $H$ has a $k$-$(\alpha,\beta)$-colouring.  Similarly, the \emph{upper chromatic number} $\overline{\chi}_{\alpha,\beta}$ is the largest $k$ for which $H$ has a $k$-$(\alpha,\beta)$-colouring.   The \emph{$(\alpha,\beta)$-spectrum} of $H$, $Spec_{(\alpha,\beta)}(H)$,  is the sequence, in increasing order, of all $k$ such that $H$ has a $k$-$(\alpha,\beta)$-colouring.  Clearly, the first and last terms of this sequence are $\chi_{\alpha,\beta}$ and $\overline{\chi}_{\alpha,\beta}$ respectively.  We say that the $(\alpha,\beta)$-spectrum has a \emph{gap}, or is \emph{broken}, when there exist integers $k_1<k_2<k_3$ such that the hypergraph is $k_1$-$(\alpha,\beta)$- and $k_3$-$(\alpha,\beta)$-colourable but not $k_2$-$(\alpha,\beta)$-colourable.  The smallest hypergraph with broken chromatic spectrum was constructed in \cite{jiang1999}.   The works of \cite{colburn99,Gionfriddo04,gionfriddo04bicolouring} study the spectra of hypergaphs obtained from design theory.  A \emph{classical colouring} of an $r$-uniform hypergraph is a $(2,r)$-colouring, that is a colouring in which no edge is monochromatic.  The \emph{chromatic number} of a hypergraph $H$, $\chi(H)$ is the minimum number of colours required for a  $(2,r)$-colouring of $H$.  In this paper we mainly consider classical colourings and $(2,2)$-colourings, that is colourings in which every edge contains exactly two colours.  

The main aim of this paper is to develop a technique by which we can augment an $r$-uniform hypergraph, that is add vertices and edges, while preserving certain properties and parameters, in the spirit envisioned in \cite{voloshin1999pseudo}.  In particular, we are interested in  increasing the number of vertices and edges to produce new $r$-uniform hypergraphs while preserving the chromatic number $\chi$, and the $(2,2)$-chromatic spectrum.  We present such a technique which we call the $(p,q)$-extension, and show that when it is applied to a $r$-uniform hypergraph $H$, both the chromatic number and the $(2,2)$-spectrum are unchanged.  To the best of our knowledge this is the first instance of an extension of an $r$-uniform hypergraph preserving both the chromatic number and the $(2,2)$-spectrum.

We then study the clique number of $\sigma$-hypergraphs.  A $\sigma$-hypergraph $H= H(n,r,q$ $\mid$ $\sigma$), where $\sigma$ is a partition of $r$,  is an $r$-uniform hypergraph having $nq$ vertices partitioned into $ n$ \emph{classes} of $q$ vertices each.  If the classes are denoted by $V_1$, $V_2$,...,$V_n$, then a subset $K$ of $V(H)$ of size $r$ is an edge if the partition of $r$ formed by the non-zero cardinalities $ \mid$ $K$ $\cap$ $V_i$ $\mid$, $ 1 \leq i \leq n$, is $\sigma$. The non-empty intersections $K$ $\cap$ $V_i$ are called the parts of $K$, and $s(\sigma)$ denotes the number of parts. These hypergraphs were introduced in \cite{CaroLauri14} and further studied in \cite{CLZ1}, and they also present interesting $(\alpha,\beta)$-spectra, in particular when $\alpha=\beta=2$.  We show in this paper that there exist $\sigma$-hypergraphs with low clique number and arbitrarily high chromatic number, in the spirit of the literature about sparse  hypergraphs with large chromatic number such as \cite{erdos1975problems,Gebauer20131483,kostochka2010}. 

In the last section, we look at applications of the $(p,q)$-extension.  In particular, we look at a specific instance of this extension which we call the \emph{$t$-star extension}. This extension is applied to the hypergraphs constructed in \cite{Gionfriddo04}, which presents $3$-uniform hypergraphs with various very interesting $(2,2)$-spectra.  We also consider this extension applied to $\sigma$-hypergraphs.   

In the case of $\sigma$-hypergraphs, we also consider the effect of the $t$-star extension on the clique number.  We show that when the extension under certain restrictions is repeated several times , giving $r$-uniform hypergraphs with large numbers of vertices and edges, the clique number and chromatic number remain stable. 

\section{Extensions of $r$-uniform hypergraphs}

In this section we present a method which extends an $r$-uniform hypergraph $H$ using an operation that simultaneously preserves  the chromatic number and the $(2,2)$-spectrum.

Consider an $r$-uniform hypergraph $H$, $r \geq 3$.  Let $E^*=\{v_1,v_2, \ldots, v_r\}$ be an edge of $H$.   Let $W=\{w_1,w_2,\ldots,w_p\}$ and $U=\{u_1,u_2,\ldots,u_q\}$ such that $p \geq 1$ and $q \geq 0$.  Let $T$ be a non-empty subset of   $\{1,2,\ldots, \min\{p, \lfloor\frac{r-1}{2} \rfloor \}\}$.  Let $P$ be a non-empty subset of $\{1,2,\ldots,\min\{p,r-2\}\}$ and let $Q$ be  a non-empty subset of $\{1,2,\ldots,\min\{q,r-2\}\}$ such that $\exists$ $x \in P$ and $\exists$ $y \in Q$ such that $x+y \leq r-1$.   Consider the new $r$-uniform hypergraph $H(p,q,T,P,Q)$ formed as follows: the vertex set of  of $H(p,q,T,P,Q)$ consists of the union of $V(H)$,$W$ and $U$.  All edges of $H$ are edges of $H(p,q,T,P,Q)$, together with new edges formed as follows:  

\begin{itemize}
\item{\emph{Type 1 edges} :   An $r$-set $K \subseteq E^* \cup W$ is a Type 1 edge if $|K \cap W| \in T$.}
\item{\emph{Type 2 edges}:  An $r$-set $K \subseteq E^* \cup W \cup U$ is a Type 2 edge if $|K \cap W| \in P$, $|K \cap U| \in Q$ such that $|K \cap W| +|K \cap U| \leq r-1$, and $|K \cap E^*|= r - |K\cap W \cup U|.$  These edges exist if $U$ is not the empty set and by the existence of $x \in P$ and $y \in Q$ such that $x+y \leq r-1$.}
\end{itemize}

For short we call this a \emph{$(p,q)$-extension} of $H$ and denote it by $H_{p,q}$.

\begin{theorem} \label{chrom_no}
Consider an $r$-uniform hypergraph $H$, $r \geq 3$, with chromatic number $\chi(H)$.  Then $\chi(H_{p,q})=\chi(H)$.
\end{theorem}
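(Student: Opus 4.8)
The plan is to prove the two inequalities $\chi(H_{p,q}) \geq \chi(H)$ and $\chi(H_{p,q}) \leq \chi(H)$ separately. The first is immediate: since $V(H) \subseteq V(H_{p,q})$ and every edge of $H$ is by construction an edge of $H_{p,q}$, the hypergraph $H$ is a subhypergraph of $H_{p,q}$, so the restriction to $V(H)$ of any proper (classical) colouring of $H_{p,q}$ is a proper colouring of $H$, giving $\chi(H_{p,q}) \geq \chi(H)$. I note also that $H$ contains the edge $E^*$, so $\chi(H) \geq 2$; this is needed below.

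For the reverse inequality I would start from an optimal proper colouring $\phi$ of $H$ using exactly $\chi(H)$ colours and extend it to $H_{p,q}$ without introducing new colours. Only the new vertices in $W$ and $U$ need to be coloured, and only the Type 1 and Type 2 edges need checking, since the edges of $H$ are untouched and already proper under $\phi$. The strategy is to give every vertex of $W$ a single colour $a$ and every vertex of $U$ a single colour $b \neq a$. This at once disposes of the Type 2 edges: each such edge meets both $W$ and $U$ (because every element of $P$ and of $Q$ is at least $1$), so it contains a vertex of colour $a$ and a vertex of colour $b$ and cannot be monochromatic.

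The crux is choosing $a$ so that no Type 1 edge becomes monochromatic, and this is where the restriction $T \subseteq \{1, \ldots, \min\{p, \lfloor (r-1)/2\rfloor\}\}$ does the work. A Type 1 edge $K$ has $|K \cap W| = t$ for some $t \in T$, whence $|K \cap E^*| = r - t \geq r - \lfloor (r-1)/2 \rfloor = \lceil (r+1)/2 \rceil$, so it meets $E^*$ in a strict majority of its vertices. As $W$ is entirely coloured $a$, such an edge is monochromatic exactly when its $E^*$-part is entirely $a$, which requires $E^*$ to contain at least $r - \max(T)$ vertices of colour $a$. The key count is that at most one colour can occur on $\geq r - \max(T)$ vertices of the $r$-set $E^*$: two such colours would force $2(r - \max(T)) \leq r$, i.e. $r \leq 2\max(T) \leq 2\lfloor (r-1)/2\rfloor \leq r-1$, a contradiction. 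Since $\chi(H) \geq 2$, there is thus a colour $a$ occurring on fewer than $r - \max(T)$ vertices of $E^*$; colouring $W$ with this $a$ and $U$ with any $b \neq a$ makes every Type 1 edge non-monochromatic.

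The main obstacle is precisely this last step: the $E^*$-part of a Type 1 edge can be \emph{any} subset of $E^*$ of the prescribed size, so I cannot merely invoke that $E^*$ is non-monochromatic under $\phi$; I genuinely need the majority property forced by the bound $\lfloor (r-1)/2\rfloor$ on $T$ together with the pigeonhole count above. I would also check the degenerate case $q = 0$ (no set $U$, hence no Type 2 edges), where the same choice of $a$ on $W$ suffices, and confirm that the extension uses no colour outside the original palette, so that $\chi(H_{p,q}) \leq \chi(H)$ and therefore $\chi(H_{p,q}) = \chi(H)$.
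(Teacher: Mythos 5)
Your proposal is correct and follows essentially the same route as the paper: both extend an optimal colouring of $H$ by giving all of $W$ a single colour chosen to be sufficiently rare on $E^*$ and all of $U$ a second colour, then dispose of Type 2 edges because they contain both colours and of Type 1 edges by a pigeonhole count exploiting $\max(T)\leq \lfloor (r-1)/2\rfloor$. The only cosmetic difference is that the paper takes the minority colour of $E^*$ (occurring on at most $\lfloor r/2\rfloor$ vertices) and bounds the number of like-coloured vertices in a Type 1 edge by $\lfloor (r-1)/2\rfloor+\lfloor r/2\rfloor<r$, whereas you take a colour occurring on fewer than $r-\max(T)$ vertices of $E^*$ so that the $E^*$-part of a Type 1 edge can never be entirely that colour.
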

\begin{proof}

Let $\chi(H)=k$ and consider $H_{p,q}$.  Since $H \subseteq H_{p,q}$, $\chi(H_{p,q}) \geq \chi(H)$.

Consider a $k$-colouring of $H$.  We show that this can be extended to a $k$-colouring of $H_{p,q}$.  Consider $E^*=\{v_1,v_2,\ldots,v_r\}$, the edge used for the extension - the vertices of this edge are coloured by some $s$ colours, where $2 \leq s \leq \min\{k,r\}$.  Let these colours be $1,2,\ldots,s$, and let $c_i$ be the number of vertices coloured $i$, where $1 \leq i \leq s$.  Without loss of generality, assume $c_1 \leq c_2 \leq \ldots \leq c_s$.  Then $c_1 \leq \lfloor \frac{r}{2} \rfloor$.  Now consider the vertices $W=\{w_1,w_2,\ldots,w_p\}$ and $U=\{u_1,u_2,\ldots,u_q\}$ .  Let us give these vertices in $W$ the colour $1$, and the vertices in $U$ the colour 2.  We show that this is a valid $k$-colouring of $H^*_{p,q}$.

Any edge in $H_{p,q}$ which is unchanged from $H$ keeps the same colouring, and so has a valid nonmonochromatic colouring.  Now let us consider a type 1 edge:  this has at most $\lfloor \frac{r-1}{2} \rfloor +  \lfloor \frac{r}{2} \rfloor$ vertices which are coloured 1.  But $\lfloor \frac{r-1}{2} \rfloor +  \lfloor \frac{r}{2} \rfloor \leq  \frac{r-1}{2}  + \frac{r}{2} \leq \frac{2r-1}{2} < r$ and hence there is at least one other vertex in the edge which must have a colour different from 1.  Hence type 1 edges are not monochromatic.

If type 2 edges exist, then they have at least one vertex taken from $W$ and at least one vertex taken from $U$: these two vertices are coloured 1 and 2 respectively, and hence again a type 2 edge is not monochromatic. Therefore this is a valid $k$-colouring of $H_{p,q}$ and therefore $\chi(H_{p,q})=\chi(H)=k$.
\end{proof}

\begin{theorem}
Let $H$ be an $r$-uniform hypergraph, $r \geq 3$.  Then $Spec_{(2,2)}(H_{p,q}) = Spec_{(2,2)}(H)$.
\end{theorem}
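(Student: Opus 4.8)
The plan is to establish the two inclusions $Spec_{(2,2)}(H)\subseteq Spec_{(2,2)}(H_{p,q})$ and $Spec_{(2,2)}(H_{p,q})\subseteq Spec_{(2,2)}(H)$ separately. Throughout I focus on the edge $E^*=\{v_1,\ldots,v_r\}$ used in the extension, and I use the fact that in any $(2,2)$-colouring the vertices of $E^*$ receive exactly two colours, which I call $1$ and $2$. Writing $c_1,c_2$ for the numbers of vertices of $E^*$ in each class and assuming $c_1\le c_2$, we have $c_1\le\lfloor r/2\rfloor$.

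For the forward inclusion, given a $k$-$(2,2)$-colouring of $H$ I would extend it exactly as in Theorem \ref{chrom_no}: colour every vertex of $W$ with the minority colour $1$ and every vertex of $U$ with colour $2$. No new colour is introduced, so the colouring still uses exactly $k$ colours, and each edge inherited from $H$ is untouched. For a Type 1 edge $K$ we have $|K\cap E^*|=r-|K\cap W|\ge r-\lfloor(r-1)/2\rfloor>\lfloor r/2\rfloor\ge c_1$, so $K\cap E^*$ cannot be entirely of colour $1$ and must contain a vertex of colour $2$; together with the colour $1$ supplied by $W$ the edge sees exactly $\{1,2\}$. For a Type 2 edge both $K\cap W$ and $K\cap U$ are non-empty, contributing colours $1$ and $2$, while the whole edge only meets the palette $\{1,2\}$; hence again exactly two colours appear, giving $k\in Spec_{(2,2)}(H_{p,q})$.

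For the backward inclusion, given a $k$-$(2,2)$-colouring $c$ of $H_{p,q}$ I would restrict $c$ to $V(H)$. Since every edge of $H$ is an edge of $H_{p,q}$, the restriction is automatically a $(2,2)$-colouring of $H$; the only thing to verify is that it still uses all $k$ colours, i.e.\ that no colour lives exclusively on $W\cup U$. Writing $C=c(E^*)$ (a two-element set), the key claim is $c(W)\cup c(U)\subseteq C\subseteq c(V(H))$, which yields the inclusion at once. To see $c(W)\subseteq C$, take $w\in W$ and any $t\in T$; since $t\le\lfloor(r-1)/2\rfloor$ we have $r-t\ge 2$, so I can choose a Type 1 edge $K$ containing $w$ with $|K\cap W|=t$ whose $E^*$-part is an $(r-t)$-subset meeting both colour classes of $E^*$. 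Then $K$ already displays both colours of $C$, and if $c(w)\notin C$ the edge would carry three colours, contradicting the $(2,2)$-property.

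To finish I would prove $c(U)\subseteq C$, now using that $c(W)\subseteq C$ has already been secured. For $u\in U$ I pick $x\in P$, $y\in Q$ with $x+y\le r-1$ and a Type 2 edge $K$ containing $u$ with $|K\cap W|=x$ and $|K\cap U|=y$; its part in $W\cup E^*$ has $r-y\ge 2$ vertices, all coloured from $C$, and by choosing the $E^*$-vertices suitably (the $E^*$-part has size $r-x-y\ge 1$ and $E^*$ realises both colours) I can arrange that this part exhibits both colours of $C$, whence $c(u)\notin C$ would again force a third colour. The main obstacle is precisely this last step: the delicate case is when the admissible Type 2 edges meet $E^*$ in a single vertex ($r-x-y=1$), so that only one colour of $C$ can be taken from $E^*$ and the second must come from $K\cap W$. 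This is exactly where having first established $c(W)\subseteq C$, together with the freedom to select which $W$- and $E^*$-vertices enter $K$, is needed, and the case analysis must be arranged so that whatever the distribution of colours $1$ and $2$ over $W$, the chosen edge still realises both. Once $c(W)\cup c(U)\subseteq C$ is proved, the two inclusions combine to give $Spec_{(2,2)}(H_{p,q})=Spec_{(2,2)}(H)$.
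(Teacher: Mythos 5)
Your proposal is correct and takes essentially the same approach as the paper: the forward inclusion extends a $k$-$(2,2)$-colouring by giving $W$ the minority colour of $E^*$ and $U$ the other colour, and the backward inclusion shows every colour on $W\cup U$ already appears on $E^*$ by exhibiting a Type 1 or Type 2 edge that would otherwise carry three colours (the paper phrases this direction contrapositively, but the construction is identical). The one step you leave slightly open, the case $|K\cap E^*|=1$ for Type 2 edges, is closed exactly as you indicate and as the paper does: put some $w'\in W$ into $K$ and choose the unique $E^*$-vertex of $K$ to have the colour of $C$ different from $c(w')$.
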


\begin{proof}

Suppose $k \not \in Spec_{(2,2)}(H)$ and let us consider a $k$-$(2,2)$-colouring of $H_{p,q}$.  Since $H$ is not $k$-$(2,2)$-colourable,  there must be $1 \leq i \leq \min\{k-2,r\}$ colours which do not appear on the vertices of $H$, and hence must appear on the vertices in $W$ and  $U$, while $k-i$ colours are used on the vertices of $H$.   Let us assume that for some value of $i$ between 1 and $\min\{k-2,r\}$, $H$ is $(k-i)$-$(2,2)$-colourable (otherwise we are done and $k \not \in Spec_{(2,2)}(H_{p,q}))$.  We colour $H$ using these $k-i$ colours and use the remaining $i$ colours on the new vertices of $H_{p,q}$.  Hence at least one vertex in $W \cup U$ has a colour different from those used in $H$.  We also note that the vertices of $E^*$ must be coloured using exactly two colours, say 1 and 2.

So  suppose there exists $x \in W$ which has a colour which does not appear on the vertices of $H$.  Recall that we can take at most $\lfloor \frac{r-1}{2} \rfloor$ vertices from $W$ for a Type 1 edge, and $\lfloor \frac{r-1}{2} \rfloor \leq  \frac{r-1}{2} \leq r-2$ for $r\geq 3$.  Hence any Type 1 edge must contain at least two vertices taken from $E^*$.  So consider the Type 1 edge which includes two vertices taken from $E^*$ which have different colours, the vertex $x$, and the remaining $r-3$ vertices taken from $E^*$ and $W$ as required (by $T$) - this is a valid Type 1 edge which contains at least three colours and hence it makes the $k$-$(2,2)$-colouring of $H_{p,q}$ invalid.

Now consider the case where all vertices in $W$ have colours which appear in $H$ - then at least one vertex in $x \in U$ contains a colour not used in $H$.  A Type 2 edge must contain at least one vertex from each of the three sets $E^*$, $W$ and $U$.  Since the vertices in $E^*$ use the colours 1 and 2,  we can always choose a vertex from $W$ and a vertex from $E^*$ which have different colours, say, without loss of generality vertex $v_1 \in E^*$ and vertex $w_1 \in W$ have different colours.  Consider the Type 2 edge which includes the vertices $v_1, w_1$ and $x$ with the remaining $r-3$ vertices taken from $E^*$,$W$ and $U$ as required (by $P$ and $Q$) - again this is a valid type 2 edge which contains 3 colours hence making the $k$-$(2,2)$-colouring of $H_{p,q}$ invalid.

Let us now consider $k \in Spec_{(2,2)}(H)$, and a valid $k$-$(2,2)$-colouring of $H$.  Let us extend this colouring to $H_{p,q}$ by considering two cases.

First consider the two colours which appear in $E^*$, say colours 1 and 2.  If one of the colour appears on more vertices than the other,  say colour 2 appears more than colour 1, we call 2 the majority colour and 1 the minority colour.  In this case we give the vertices in $W$ the minority colour 1, while the vertices in $U$ (if any) receive the majority colour 2.  Thus, in $E^*$ there is at least one and at most $\lfloor \frac{r-1}{2} \rfloor$ vertices which receive the colour 1, and at least two and at most $r-1$ vertices which receive the colour 2.  If we consider a Type 1 edge, then the minority colour appears on at least one and at most $\lfloor \frac{r-1}{2} \rfloor + \lfloor \frac{r-1}{2} \rfloor \leq 2 \left(\frac{r-1}{2} \right ) \leq r-1$ vertices, and hence there must be at least one and at most $r-1$ vertices which must be taken from $E^*$ having the colour 2.  Hence any type 1 edge has exactly 2 colours.  If we consider a type 2 edge this must have at least one vertex from $W$ and at least one vertex from $U$ which have different colours and hence the edge is not monochromatic - all other vertices in the edge are taken from $W$, $U$ and $E^*$ and hence must have colours 1 or 2.  Therefore any type 2 edge has exactly two colours too.

Now let us consider the case in which both colours in $E^*$ appear an equal number of times (hence $r$ is even).  Then we can give the vertices in $W$ one colour, say colour 1, while the vertices in $U$ receive colour 2.  A type 1 edge has at least one and at most $\lfloor \frac{r-1}{2} \rfloor +\frac{r}{2}$ vertices which receive the colour 1.  But  $\lfloor \frac{r-1}{2} \rfloor +\frac{r}{2} \leq \frac{r-2}{2} + \frac{r}{2} \leq \frac{2r-2}{2} \leq r-1$ - hence it must have at least one and at most $r-1$ other vertices taken from $E^*$ which must be of colour 2.  A type 2 edge must have at least one vertex from $W$ and at least one vertex from $U$ which have different colours and hence the edge is not monochromatic - all other vertices in the edge are taken from $W$, $U$ and $E^*$ and hence must have colours 1 or 2.  Therefore any type 2 edge also has exactly two colours.

Thus the $k$-$(2,2)$-colouring of $H$ can be extended to a valid $k$-$(2,2)$-colouring of $H_{p,q}$.  Hence $Spec_{(2,2)}(H_{p,q})=Spec_{(2,2)}(H)$.

\end{proof}

\section{Clique-free $\sigma$-hypergraphs}

We now present some ideas and results which will then be used to present clique free $(2,2)$-colourable $\sigma$-hypergraphs with high chromatic number.

Let $\sigma = (a_1,a_2, \ldots, a_s)$  be a partition of the integer $r$ into $s$ parts, that is $\sum_{i=1}^{i=s}{a_i}=r$.  We define $\sigma_i=(a_1,a_2, \ldots,a_i-1, \ldots, a_s)$, a partition of $r-1$.  We say that $\sigma$ is \emph{symmetric} if and only if, for $1 \leq i \leq s$, all the $\sigma_i$ give the same partition of $r-1$. 

A \emph{rectangular} partition is one in which all parts are equal.  We now prove a preliminary lemma:

\begin{lemma} \label{symmetric_partition}

A partition $\sigma=(a_1,a_2, \ldots, a_s)$ is symmetric if and only if $\sigma$ is rectangular.
\end{lemma}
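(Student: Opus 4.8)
The plan is to prove both directions of the equivalence directly from the definition of symmetry. The definition says $\sigma$ is symmetric precisely when all the partitions $\sigma_i = (a_1, \ldots, a_i - 1, \ldots, a_s)$ of $r-1$ coincide as partitions (i.e. as multisets of parts). The backward direction is immediate: if $\sigma$ is rectangular, say $a_1 = a_2 = \cdots = a_s = a$, then each $\sigma_i$ consists of $s-1$ copies of $a$ together with one copy of $a-1$, so all the $\sigma_i$ are literally the same multiset and $\sigma$ is symmetric. So the substance of the lemma is the forward direction.

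For the forward direction, I would argue by contraposition: assume $\sigma$ is \emph{not} rectangular and exhibit two indices $i, j$ for which $\sigma_i \neq \sigma_j$. Since $\sigma$ is not rectangular, its parts are not all equal, so writing the distinct values taken by the $a_k$, there exist two parts of different sizes. The cleanest approach is to pick $i$ with $a_i$ equal to the maximum part value and $j$ with $a_j$ equal to the minimum part value, where $a_i > a_j$. Then I compare the multisets $\sigma_i$ and $\sigma_j$. Decrementing the maximum part $a_i$ yields a multiset whose largest element is still large (either $a_i$, if some other part equals $a_i$, or $a_i - 1$), whereas decrementing the minimum part $a_j$ leaves all the large parts untouched and produces a part of size $a_j - 1 < a_i - 1$.

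The key step is to make the multiset comparison rigorous, and this is where the main obstacle lies: I must confirm that the two decrements genuinely produce different multisets, which requires care about how many parts share a given value. The clean way to handle this is to count, for each partition, the number of parts equal to the maximum value $M = a_i$. In $\sigma$ itself, let $m \geq 1$ be the number of parts equal to $M$. In $\sigma_i$ (decrementing a maximum part), the count of parts equal to $M$ drops to $m - 1$. In $\sigma_j$ (decrementing a minimum part, which has value strictly less than $M$), the count of parts equal to $M$ remains $m$, since we only lowered a part that was already below $M$ and the new value $a_j - 1$ is still below $M$. Because $m \neq m - 1$, the partitions $\sigma_i$ and $\sigma_j$ differ in the multiplicity of their largest part, hence are distinct, contradicting symmetry.

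I would therefore organise the writeup as: first state the easy rectangular-implies-symmetric direction in one or two lines; then handle the converse by contraposition, fixing $M$ as the maximum part value and $m$ as its multiplicity, choosing $i$ to be an index achieving the maximum and $j$ an index achieving a strictly smaller value (which exists precisely because $\sigma$ is non-rectangular), and finally comparing the multiplicities of $M$ in $\sigma_i$ and $\sigma_j$ to conclude they are unequal. The whole argument hinges on the invariant ``multiplicity of the maximum part'', which neatly separates the two decrements and avoids any messy case analysis on the individual $a_k$.
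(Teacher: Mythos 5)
Your proof is correct, and it follows the same overall strategy as the paper: the rectangular-implies-symmetric direction is dispatched directly, and the converse is proved by contraposition by exhibiting two indices $i,j$ with $\sigma_i \neq \sigma_j$. Where you differ is in how the inequality $\sigma_i \neq \sigma_j$ is certified. The paper picks any two unequal parts $a_j > a_i$ and splits into cases: if $a_i = 1$ the two derived partitions have different numbers of parts ($s-1$ versus $s$), and if both exceed $1$ it argues from $a_j - 1 > a_i - 1$ that the multisets differ -- a step that is true but stated rather tersely, since unequal decremented values do not by themselves force unequal multisets without a word about multiplicities. You instead decrement a maximum part and a strictly smaller part and track a single invariant, the multiplicity of the maximum value $M$: it drops from $m$ to $m-1$ in one case and stays at $m$ in the other, which settles both of the paper's cases at once (including the disappearing part when $a_j = 1$, since that does not affect the count of $M$-valued parts). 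Your version is slightly more uniform and arguably more rigorous at the one point where the paper is loose; the paper's version is more elementary in that it never needs to name the maximum or its multiplicity. Both are valid proofs of the lemma.
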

\begin{proof}

We first show that a rectangular partition is symmetric. Let all parts $a_1,a_2,\ldots,a_s$ be equal, say to $m \geq 1$, such that $r=sm$.  If $m=1$ (and hence $s=r$), then all $\sigma_i$ have $s-1$ parts each equal to 1, hence all $\sigma_i$'s are the same parition of $r-1$.  If $m=r$ (and hence $s=1$), then $\sigma_1 = (r-1)$, and this is the only possible deletion.  If  $1 < m <r$, then it is clear that $\sigma_i=(m,m,\ldots,m-1,\ldots,m)$, and rearranging the partitions in non-increasing order, all $\sigma_i$'s are the same partition of $r-1$.

Now consider a non-rectangular partition $\sigma$ of $r$ -- then at least two parts, say $a_i$ and $a_j$, are not equal, and $a_j > a_i \geq 1$.  If $a_i=1$ then $\sigma_i$ has $s-1$ parts and hence is different from $\sigma_j$ which has $s$ parts .  If both $a_i$ and $a_j$ are greater than 1,  then $\sigma_i$ and $\sigma_j$ both have $s$ parts, but $a_j-1 > a_i-1$ and hence the partitions are different and $\sigma$ is not symmetric.

\end{proof}

\begin{theorem}\label{rect_sigma}
Consider $H=H(n,r,q | \sigma)$.  Then 
\begin{enumerate}
\item {$H$ has an $(r+1)$-clique if and only if $\sigma=(\Delta,....,\Delta,\Delta -1)$.}
\item{$H$ has an $(r+2)$-clique if and only if either $\sigma=(r)$ and $q \geq r+2$, in  which case $H$ has a $q$-clique, or $\sigma=(1,1,\ldots,1)$ and $n \geq r+2$ in which case $H$ has an $n$-clique.}
\item{For $r \geq 4$, there exist $H$ such that $\omega(H) \leq r$.}
\end{enumerate}

\end{theorem}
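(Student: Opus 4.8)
The plan is to reduce everything to the combinatorics of the \emph{profile} of a clique. If $S$ is a clique, let $\tau=(b_1,\dots,b_t)$ be the partition of $|S|$ recording the nonzero intersection sizes $|S\cap V_i|$. Since a clique is a vertex set all of whose $r$-subsets are edges, and an $r$-subset is an edge exactly when its own profile equals $\sigma$, the statement ``$S$ is a clique'' translates into: every way of deleting $|S|-r$ vertices from $S$ (respecting the part sizes of $\tau$) leaves a sub-partition equal to $\sigma$. I would set up and use this correspondence once at the start, so that each part becomes a statement purely about partitions.

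For part 1, a clique of size $r+1$ is exactly a profile $\tau$ of $r+1$ all of whose single-part decrements $\tau_i$ equal $\sigma$; that is, $\tau$ must be symmetric in the sense of the preceding definition. By Lemma \ref{symmetric_partition} this forces $\tau$ to be rectangular, say $\tau=(m^t)$ with $mt=r+1$, and then $\sigma=\tau_i=(m,\dots,m,m-1)$ with $\Delta=m$. Conversely, given $\sigma=(\Delta,\dots,\Delta,\Delta-1)$ I would recover $\tau=(\Delta^s)$ (its sum $s\Delta$ equals $r+1$ since $\sigma$ sums to $r$) and take $\Delta$ vertices from each of $s$ classes; deleting any single vertex returns $\sigma$, so this set is an $(r+1)$-clique. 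The only subtlety is the degenerate value $\Delta=1$, where the ``$\Delta-1$'' part disappears and $\sigma=(1^r)$; this extreme partition behaves like the cases in part 2 and I would dispatch it together with those.

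For part 2 I would bootstrap from part 1: every $(r+1)$-subset of an $(r+2)$-clique is itself an $(r+1)$-clique, so by the profile structure just established each one-vertex deletion $\tau-e_i$ of the size-$(r+2)$ profile $\tau$ must be rectangular. The heart of the argument is the elementary claim that, for $r\ge 3$, the only partitions $\tau$ of $r+2$ all of whose single decrements are rectangular are $\tau=(r+2)$ and $\tau=(1^{r+2})$. I would prove this by assuming $\tau$ has a part $\ge 2$ and at least two parts, deducing $\tau=(c+1,c^{t-1})$ from the deletion at the largest part, and then checking that the deletion at a second part is no longer rectangular, a contradiction. The two survivors give precisely $\sigma=(r)$ with a single class of $q\ge r+2$ vertices (a $q$-clique) and $\sigma=(1^r)$ with $r+2$ singly-occupied classes (an $n$-clique); the converse constructions are immediate. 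I expect this counting claim to be the main obstacle, mostly in getting the small-$t$ and vanishing-part boundary cases right.

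For part 3, since a single edge is already an $r$-clique, it suffices to exhibit, for each $r\ge 4$, a partition of $r$ that is neither nearly-rectangular nor one of the two extreme partitions, so that part 1 forbids an $(r+1)$-clique and hence $\omega(H)=r$. I would take $\sigma=(r-1,1)$: it is not $(1^r)$, not $(r)$, and it equals $(\Delta,\Delta-1)$ only when $r=3$, so for $r\ge 4$ no $(r+1)$-clique exists for any admissible $n,q$. Throughout, the one genuinely delicate point is that the extreme partition $\sigma=(1^r)$ makes clique existence depend on $n$ (it yields an $(r+1)$-clique only when $n\ge r+1$), so I would state the size hypotheses carefully and treat $(1^r)$ and $(r)$ as the explicitly flagged boundary cases rather than folding them silently into the generic form.
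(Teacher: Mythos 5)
Your proposal is correct and follows essentially the same route as the paper: part 1 via the profile-of-a-clique reduction and Lemma \ref{symmetric_partition}, part 2 by bootstrapping from part 1 to constrain the size-$(r+2)$ profile, and part 3 by exhibiting a partition (the paper leaves it generic, you take $(r-1,1)$) that is not of the form $(\Delta,\ldots,\Delta,\Delta-1)$. If anything you are slightly more careful than the paper on the boundary cases --- flagging that $\sigma=(1^r)$ needs $n\ge r+1$ for the $(r+1)$-clique, and phrasing part 2 so that the extra vertex of the $(r+2)$-set landing in a \emph{new} class is covered, a case the paper's assertion that the profile is $(\Delta+1,\Delta,\ldots,\Delta)$ silently skips.
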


\begin{proof}
1)  Consider a set of $r+1$ vertices taken from the vertex classes of $H$.  Suppose that the numbers of vertices in each class gives a partition $\sigma^*$ of $r+1$.  If these $r+1$ vertices are to form a complete subhypergraph, then the deletion of any vertex must give $r$ vertices which form the partition $\sigma$, that is  $\sigma^*$ must be symmetric and hence  by Lemma \ref{symmetric_partition} it must be rectangular and equal to $(\Delta,\Delta,\ldots,\Delta)$.  This implies that $\sigma$ must be equal to $(\Delta,\Delta,\ldots,\Delta-1)$ as claimed.

On the other hand, consider $\sigma = (\Delta,\Delta,\ldots,\Delta-1)$.  Then $r+1$ is a multiple of $\Delta$. Consider a set of $r+1$ vertices such that each set of $\Delta$ vertices is taken from a different class.  Then any $r$-subset of these vertices forms a partition $\sigma =(\Delta,\Delta,\ldots,\Delta-1)$, and hence any $r$-subset is an edge in $H$.  Thus these vertices form a clique of cardinality $r+1$.
\medskip

2)\indent Suppose $H$ has an $(r+2)$-clique on the vertex set $A$.  Let $B \subset A$, $|B|=r+1$.  Then $B$ is an $(r+1)$-clique.  Hence by (1), $\sigma=(\Delta,\Delta,\ldots,\Delta-1)$, and the vertices of $B$ are partitioned into $s$ parts with $\Delta$ vertices from each of $s$ classes of $H$.

Therefore the vertices of $A$ are partitioned into $s$ classes of $H$ to form the partition $\sigma^{**}=(\Delta+1,\Delta,\Delta,\ldots,\Delta)$ of $r+2$.  Deleting any two vertices from this set must give $r$ vertices which form the partition $\sigma$.  If s=1 then $\Delta=(r)$ and $q \geq r+2$.  If $s \geq 2$ and  $\Delta \geq 2$, then deleting two vertices from a part of size $\Delta$ in  $\sigma^{**}$ gives a partition of $r$ different from $\sigma$.  Therefore $\Delta=1$ and hence $\sigma=(1,1,\ldots,1)$ and $n \geq r+2$.

Conversely, if $\sigma=(r)$ and $q \geq r+2$,  then any $r+2$ vertices taken from the same class form an $(r+2)$-clique.  In fact, each class is a $q$-clique.  Similarly, if $\sigma=(1,1,,\ldots,1)$ and $n \geq r+2$, any $r+2$ vertices, each from a different class of $H$ form an $(r+2)$-clique. In fact, any set of $n$ vertices each taken from a different class of $H$ forms an $n$-clique.
\medskip

3)\indent For any value of $r \geq 4$, we can define a partition of $\sigma$ of $r$ different from $(\Delta,..,\Delta,\Delta-1)$.  So, for any value of $n\geq s(\sigma)$ and $r \geq 4$, there exist $\sigma$-hypergraphs, which do not contain an $(r+1)$-complete subhypergraph, and hence there exist $\sigma$-hypergraphs with $\omega(H) \leq r$.
\end{proof}

NOTE : One can observe that an $(r+1)$-clique is possible if and only if  $\Delta \mid r+1$.  The number of divisors of a positive  integer $n$, $d(n)$, clearly satisfies the inequality $d(n) \leq 2 \sqrt{n}$ (for further information, see \cite{hardy1979introduction}).  On the other hand, consider the number of different partitions of $n$, $p(n)$.  An asymptotic expression for $p(n)$ is given by \[ p(n) \sim \frac{1}{4n\sqrt{3}}e^{\pi\sqrt{\frac{2n}{3}}},\] as given in \cite{andrews1998theory}.  Hence, for large values of $r$,   almost all $\sigma$-hypergraphs are $(r+1)$-clique free.

\bigskip

Theorem \ref{rect_sigma}  can be used to show that there exist $(2,2)$-colourable sparse $\sigma$-hypergraphs which are $(r+1)$-clique free, and have arbitrarily large chromatic number.  This is another important and interesting feature of $\sigma$-hypergraphs.  

\begin{theorem} \label {cliquefreesparse}
Consider $H=H(n,r,q | \sigma = (r-1,1))$, $r \geq 4$.  For any integer $t \geq 1$,  let $n = t + 1$ and $q = (r-2)t + 1$.  Then 
\begin{enumerate}
\item{$H$ is $(2,2)$-colourable }
\item{$\chi(H) = t+1 \geq \sqrt{\frac{|V(H)|}{r-2}}$.}
\item{$H$ is $(r+1)$-clique free, that is $\omega(H) \leq r$.} 
\item{The number of edges of $H$ is  \[t(t+1)((r-2)t+1){((r-2)t+1) \choose r-1} = O(rt^{r+2}e^{r-1})\].}
\end{enumerate}
\end{theorem}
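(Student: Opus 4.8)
The plan is to take the four claims in a convenient order, disposing of the structural ones first and reserving the lower bound on $\chi(H)$ for last, since that is where the real work lies. For part (3) I would simply observe that with $s(\sigma)=2$ the partition $\sigma=(r-1,1)$ matches the clique-producing shape $(\Delta,\dots,\Delta,\Delta-1)=(\Delta,\Delta-1)$ only if $\Delta=r-1$ and $\Delta-1=1$, forcing $r=3$; since $r\ge 4$ this never happens, so Theorem \ref{rect_sigma}(1) immediately gives that $H$ has no $(r+1)$-clique, i.e. $\omega(H)\le r$. For part (1), colour every vertex of class $V_i$ with colour $i$. Each edge takes $r-1$ vertices from one class and a single vertex from a different class, so it meets exactly two classes and hence sees exactly two colours; this is a $(2,2)$-colouring, so $H$ is $(2,2)$-colourable.

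For part (4) I would count edges directly: an edge is determined by an ordered choice of the class contributing $r-1$ vertices and the class contributing the single vertex (giving $n(n-1)$ ordered pairs), together with a choice of $r-1$ vertices from the first and one vertex from the second, for $n(n-1)\,q\binom{q}{r-1}$ edges in total. Substituting $n=t+1$ and $q=(r-2)t+1$ yields the stated closed form. The asymptotic estimate is then routine: bound $t(t+1)\le 2t^2$ and $(r-2)t+1\le rt$, and use $\binom{m}{k}\le(em/k)^k$ together with $\frac{(r-2)t+1}{r-1}\le t$ (valid since $t\ge1$) to obtain $\binom{(r-2)t+1}{r-1}\le(et)^{r-1}=e^{r-1}t^{r-1}$; multiplying these gives $O(r\,e^{r-1}t^{r+2})$.

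The heart of the theorem is part (2), and the main obstacle is the lower bound $\chi(H)\ge t+1$ (the upper bound already follows from the per-class colouring of part (1), which uses $t+1$ colours). Here I would exploit the fact that a monochromatic edge requires $r-1$ vertices of one colour inside a single class and one more vertex of that colour in a different class. Call a colour \emph{concentrated} if it occurs at least $r-1$ times in some class, and \emph{diffuse} if it occurs at most $r-2$ times in every class. The key structural observation is that a concentrated colour must be confined to a single class: if it appeared $\ge r-1$ times in class $i$ and at least once in some class $j\ne i$, those vertices would form a monochromatic edge. Writing $C$ and $D$ for the numbers of concentrated and diffuse colours, I would split into two cases. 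If $D\ge t+1$ we are done, since the total is at least $D$. If $D\le t$, then in any fixed class the diffuse colours cover at most $D(r-2)\le t(r-2)=(r-2)t<q$ of its $q=(r-2)t+1$ vertices, so at least one vertex of that class must receive a concentrated colour; hence each of the $t+1$ classes carries at least one concentrated colour confined to it, giving $C\ge t+1$. Either way the total number of colours is at least $t+1$, so $\chi(H)=t+1$.

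Finally, the numerical inequality in (2) is a one-line check: since $|V(H)|=(t+1)((r-2)t+1)$, the claim $t+1\ge\sqrt{|V(H)|/(r-2)}$ is equivalent to $(t+1)^2(r-2)\ge(t+1)((r-2)t+1)$, i.e. to $r-2\ge 1$, which holds for $r\ge 4$. I expect the concentrated/diffuse dichotomy, and in particular verifying that the case $D\le t$ forces a concentrated colour in every one of the $t+1$ classes, to be the only genuinely delicate step.
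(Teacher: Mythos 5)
Your proposal is correct and follows essentially the same route as the paper: the monochromatic-by-class colouring for part (1), the appeal to Theorem \ref{rect_sigma} for part (3), the count $2\binom{n}{2}q\binom{q}{r-1}$ with $\binom{m}{k}\le(em/k)^k$ for part (4), and a pigeonhole argument on the class size $q=(r-2)t+1$ for the lower bound in part (2). Your concentrated/diffuse dichotomy is only a cosmetic repackaging of the paper's argument (which assumes $t$ colours, finds in each class a colour repeated $r-1$ times, and forces two classes to share one); your version is arguably a touch cleaner since it never presupposes the total number of colours, but the underlying observation --- that a colour occurring $r-1$ times in a class cannot appear in any other class --- is the same.
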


\begin{proof}
1)    We first show that there exists at least one valid $(2,2)$-colouring of $H$.  Consider $n$ colours and colour the classes monochromatically using a different colour for each class.  Then any edge has exactly $s(\sigma)=2$ colours as required.
\medskip

2) \indent  We now show that $H$ is not $t$-colourable.  Consider a colouring using $t$ colours.  Since $q=(r-2)t + 1$, there is at least one colour in each class which repeats at least $r-1$ times, and as $n \geq t+ 1$, at least one colour is repeated $r-1$ times in two or more classes.  Hence we can form a monochromatic edge from these 2 classes.  Therefore, at least $t+1$ colours are required for a valid nonmonochromatic colouring of $H$.

But in (1) we showed that $H$ is $n$-$(2,2)$-colourable, and since $n=t+1$, then $H$ is $(t+1)$-colourable.  Hence $\chi(H)=t+1$.

Now $|V(H)| = (t+1)((r-2)t+1) \leq (r-2)(t+1)^2$, and hence $(t+1) \geq \sqrt{\frac{|V(H)|}{r-2}}$.  Therefore $\chi(H) = t+1 \geq \sqrt{\frac{|V(H)|}{r-2}}$. 
\medskip

3) \indent  Since $\sigma=(r-1,1)$ and $r \geq 4$, by Theorem \ref{rect_sigma}, $\omega(H) \leq r$, and hence $H$ is $(r+1)$-clique free.
\medskip

4)\indent The number of edges is $2 {n \choose 2}{q \choose r-1}{q \choose 1} = 2 \frac{(t+1)!}{(t-1))!2!}{q \choose r-1}{q} = qt(t+1){q \choose r-1}$. Since $q = (r-2)t + 1$, this is equal to $t(t+1)((r-2)t+1){(r-2)t+1 \choose r-1}$. This is of order less than $rt^3(et)^{r-1}= rt^{r+2}e^{r-1}$, since ${n \choose k} \leq (\frac{ne}{k})^k$. 

 Hence $|E(H)| =O(rt^{r+2}e^{r-1})$.
\end{proof}

The bound in part 4 of the previous theorem can be compared with the best upper bound given in \cite{Gebauer20131483,kostochka2010}, where the authors consider the upperbound on the minimum number of edges of an $r$-uniform hypergraph with chromatic number greater than $t$, denoted by $m(r,t)$.  The upperbound given in \cite{Gebauer20131483} is $m(r,t) \leq t^{(1+o(1))r}$.

\section{Applications of $(2,2)$-spectrum preserving extensions}

In this section we present two examples  of applications of the $(p,q)$-extension.  First, we use a special case of this extension to create new $3$-uniform hypergraphs with very interesting spectra.  Then we use this same extension on $\sigma$-hypergraphs and show that the clique number remains stable at $r+1$, while the chromatic number is preserved.

Perhaps the most spectacular constructions of hypergraphs with broken spectra are those by Goinfriddo in \cite{Gionfriddo04}.  These constructions are based on $P_3$ designs and give different types of 3-uniform hypergraphs with very interesting chromatic spectra.  In Gionfriddo's colourings, every edge must have exactly two vertices of the same colour, and one vertex of another colour, and thus these are equivalent to a $(2,2)$-colouring. One of her more striking results is summarized in the following theorem:

\begin{theorem} \label{lucia}
Let $N=0 \mod 4$.  Then:
\begin{enumerate}
\item{For $N \geq 4$, there exist 3-uniform hypergraphs on $N$ vertices whose $(2,2)$-spectrum is $\{2,4,6,\ldots,\frac{N}{2} \}$.}
\item{For $N \geq 12$, there exist 3-uniform hypergraphs on $N$ vertices whose $(2,2)$-spectrum is or $\{5,7,\ldots, \frac{N-2}{2}\}$.}
\end{enumerate}
\end{theorem}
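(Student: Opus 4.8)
The statement is Gionfriddo's, so the natural route is to reconstruct the underlying design-theoretic machinery rather than treat the two spectra in isolation. The plan is to realise each of the required $3$-uniform hypergraphs as the \emph{block hypergraph} of a carefully chosen $P_3$-design of order $N$: the vertices are the points of the design and the edges are its three-point blocks. Such designs exist precisely because $N\equiv 0\pmod 4$ forces $\binom{N}{2}$ to be even, so that $K_N$ decomposes into paths on three vertices. Under this identification a $(2,2)$-colouring is exactly a partition of the point set in which every block meets exactly two classes --- equivalently, no block is monochromatic and no block is rainbow --- so the whole problem reduces to determining, for which $k$, such a $k$-part partition exists.

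First I would pin down the two endpoints of each spectrum. For (1) the upper value $\frac{N}{2}$ is realised by a partition into classes of size two in which each block contains exactly one class, i.e. by a perfect matching of the points meeting every block in exactly one of its three pairs, while the lower value $2$ is realised by any proper $2$-colouring (with two colours, bichromatic and non-monochromatic coincide). For (2), where the feasible range starts at $5$, one must instead establish $\chi_{2,2}=5$ and $\overline{\chi}_{2,2}=\frac{N-2}{2}$; note that $N\ge 12$ is exactly what gives enough points for the lower endpoint to sit at $5$. Having fixed the endpoints, I would generate all interior values of the correct parity by a controlled \emph{merge} operation on the extremal all-pairs colouring: repeatedly amalgamating two classes that can be combined without creating a monochromatic or rainbow block, each such step arranged to change the colour count in a way that stays within the prescribed residue class.

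The crux, and what I expect to be the \textbf{main obstacle}, is the gap itself: proving that \emph{no} $(2,2)$-colouring uses a number of colours of the forbidden parity. A generic $P_3$-design will not have this rigidity, so the real work is to build designs containing gadget sub-configurations that force a parity constraint while leaving the rest of the structure free enough to attain every admissible value. Concretely, I would look for a $\mathbb{Z}_2$-valued invariant of a $(2,2)$-colouring --- for instance a count, taken modulo $2$, of some block-feature compelled by the bichromatic condition, or a potential that provably changes by exactly $\pm 2$ under every admissible recolouring --- and prove it constant over \emph{all} valid colourings, not merely the ones I construct. Constancy of this invariant together with achievability of both endpoints then confines the spectrum to a single residue class and yields the stated gaps.

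Finally, I would remark that for part (2) the parity invariant alone is not sufficient: it rules out the even values $2$ and $4$, but excluding the odd value $3$ (and $1$) requires a separate lower-bound argument --- a short counting or Ramsey-type estimate showing that too few colours force either a monochromatic or a rainbow block. Thus the complete picture needs three ingredients: the design-based reduction, the determination of $\chi_{2,2}$ and $\overline{\chi}_{2,2}$ with the supplementary lower bound for (2), and the parity invariant governing the interior values; of these, establishing the invariant together with a matching design is where essentially all the difficulty lies.
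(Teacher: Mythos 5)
First, note that the paper does not prove this statement at all: Theorem \ref{lucia} is quoted as a known result of Gionfriddo from \cite{Gionfriddo04}, so there is no in-paper argument to measure your attempt against. Your reconstruction of the general framework is on the right track --- the constructions are indeed based on $P_3$-designs, and the reduction of a $(2,2)$-colouring to a partition of the points in which every block is bichromatic but not rainbow is correct --- but as written your text is a research plan rather than a proof, and the gap sits exactly where you yourself locate it. You never exhibit a concrete $P_3$-design of order $N$, never define the $\mathbb{Z}_2$-valued invariant you propose, and never prove it is constant over all valid colourings; ``I would look for an invariant and prove it constant'' is essentially the entire content of the theorem, not a step towards it. Without the explicit gadget sub-configurations that force the parity constraint, nothing in your argument rules out a $(2,2)$-colouring with, say, $3$ or $\frac{N}{2}-1$ colours in part (1), and the endpoint claims ($\chi_{2,2}=5$ and $\overline{\chi}_{2,2}=\frac{N-2}{2}$ in part (2)) are asserted rather than derived. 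The merge argument for the interior values is likewise only a heuristic until one verifies, on a specific design, that each amalgamation step preserves validity.

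There is also a small factual slip: a $P_3$-decomposition of $K_N$ exists if and only if $\binom{N}{2}$ is even, i.e. $N\equiv 0$ or $1 \pmod 4$, so $N\equiv 0\pmod 4$ is sufficient but is not what ``forces'' evenness ``precisely''. More importantly, mere existence of some $P_3$-design is far from enough: as you yourself observe, a generic design has no spectral rigidity, so the theorem lives entirely in the choice of a particular design. To complete this you would need to go back to \cite{Gionfriddo04} (or rebuild her constructions from scratch), write down the block sets explicitly, and then verify both the achievability of every value of the stated parity and the non-achievability of every other value.
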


Now, using the $(p,q)$-extension, we can create new $3$-uniform hypergraphs while preserving their $(2,2)$-spectrum.  In particular, consider the $(p,q)$-extension when $W=\{w_1,w_2,\ldots,w_t\}$, $U= \emptyset$, and the set $T$ is $\{1,2,\ldots,t\}$, where $t \leq \lfloor \frac{r-1}{2} \rfloor$. We call this specific instance of the $(p,q)$-extension extension the \emph{t-star} extension and denote it by $H^*_t$.  Therefore the simplest form of the $(p,q)$-extension extension can be considered to be a $1$-star extension.  Now, if we apply the $1$-star extension once, twice or three times, we get the following corollary, which removes the restriction on $N$:

\begin{corollary}
Let $N$ be an integer.  Suppose $N=j \mod 4$.  Then there exist $3$-uniform hypergraphs on $N$ vertices with $(2,2)$-spectrum equal to
\begin{enumerate}
\item{ $\{2,4,6,\ldots,\frac{N-j}{2}\}$ for $N \geq 4$.}
\item{$\{5,7,\ldots,  \frac{N-2-j}{2}\}$ for $N \geq 12$.}
\end{enumerate}
\end{corollary}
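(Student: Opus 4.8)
The plan is to reduce the statement to Gionfriddo's Theorem~\ref{lucia} by peeling off the residue $j$ and then re-inserting exactly $j$ vertices one at a time through the $1$-star extension, each step of which leaves the $(2,2)$-spectrum untouched.

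First I would set $M = N - j$ and write $N = M + j$. Since $0 \le j \le 3$ and $j \equiv N \pmod 4$, the integer $M$ is a multiple of $4$; moreover $M \ge 4$ whenever $N \ge 4$, and $M \ge 12$ whenever $N \ge 12$, because $M$ is simply the largest multiple of $4$ not exceeding $N$ and so cannot drop below the relevant threshold. Hence $M$ meets the hypotheses of Theorem~\ref{lucia}, yielding a base $3$-uniform hypergraph $H_0$ on $M$ vertices whose $(2,2)$-spectrum is $\{2,4,6,\ldots,\frac{M}{2}\}$ in case~(1), or $\{5,7,\ldots,\frac{M-2}{2}\}$ in case~(2).

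Next I would check that the $1$-star extension is a legitimate $(p,q)$-extension when $r = 3$: we take $p = 1$, $q = 0$ and $T = \{1\}$, which is admissible because $1 = \lfloor\frac{r-1}{2}\rfloor$ for $r=3$. With $U = \emptyset$ there are no Type~2 edges, and the only new edges are the three Type~1 edges $\{w_1\}\cup\{v_i,v_j\}$ formed from a chosen edge $E^* = \{v_1,v_2,v_3\}$. Crucially, each application adds exactly one vertex and preserves every edge already present, so a valid $E^*$ is always on hand for the following step. Building a chain $H_0, H_1, \ldots, H_j$ in which $H_{i+1}$ is a $1$-star extension of $H_i$, each $H_i$ is a $3$-uniform hypergraph on $M+i$ vertices, and therefore $H_j$ has $M + j = N$ vertices.

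Finally I would invoke the spectrum-preservation theorem $j$ times along this chain to conclude $Spec_{(2,2)}(H_j) = Spec_{(2,2)}(H_0)$, and then substitute $M = N - j$ to turn the two base spectra into $\{2,4,6,\ldots,\frac{N-j}{2}\}$ and $\{5,7,\ldots,\frac{N-2-j}{2}\}$, exactly as claimed. No deep difficulty arises here, since the extension does all the work; the only points demanding attention are the elementary range-bookkeeping that keeps $M$ inside the admissible interval of Theorem~\ref{lucia} for every eligible $N$, and the remark that a property preserved by one $1$-star extension is automatically preserved by finitely many iterated ones.
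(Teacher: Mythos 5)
Your proposal is correct and follows essentially the same route the paper takes: peel off the residue $j$ to land on a multiple of $4$ satisfying the hypotheses of Theorem~\ref{lucia}, then apply the $1$-star extension $j$ times (correctly using iterated $1$-star extensions rather than a single $j$-star extension, which would be inadmissible for $r=3$ when $j\geq 2$), invoking the spectrum-preservation theorem at each step. The range bookkeeping ($M\geq 4$ resp.\ $M\geq 12$ whenever $N$ is) and the substitution $M=N-j$ are exactly as the paper intends.
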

\medskip
We now consider the application of the $t$-star extension  to an $r$-uniform $\sigma$-hypergraph $H(n,r,q \mid \sigma)$.  We study the effect of this extension on the clique number $\omega(H)$ .  The next proposition shows that the clique number is stable when the $t$-star extension is applied to an $r$-uniform hypergraph.

\begin{proposition}
Let $H$ be an $r$-uniform hypergraph.  Then \[\omega(H^*_t)= \max\{ \omega(H), r+t \}\].
\end{proposition}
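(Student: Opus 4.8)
The plan is to prove the two inequalities $\omega(H^*_t)\geq\max\{\omega(H),r+t\}$ and $\omega(H^*_t)\leq\max\{\omega(H),r+t\}$ separately. For the lower bound I would first note that $H\subseteq H^*_t$, so immediately $\omega(H^*_t)\geq\omega(H)$. Then I would exhibit an explicit clique of size $r+t$, namely the set $E^*\cup W$. To verify this is a clique, take any $r$-subset $K\subseteq E^*\cup W$ and write $a=|K\cap W|$, so that $0\leq a\leq t$ since $|W|=t$. If $a=0$ then $K=E^*$, which is an edge of $H$; if $1\leq a\leq t$ then $a\in T=\{1,\ldots,t\}$, so $K$ is a Type 1 edge. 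Either way $K$ is an edge of $H^*_t$, so $E^*\cup W$ is a complete subhypergraph on $r+t$ vertices and $\omega(H^*_t)\geq r+t$. Together these give $\omega(H^*_t)\geq\max\{\omega(H),r+t\}$.

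For the upper bound I would take an arbitrary clique $C$ of $H^*_t$ and split into two cases according to whether $C$ meets the set $W$ of new vertices. If $C\cap W=\emptyset$, then $C\subseteq V(H)$; since $U=\emptyset$ there are no Type 2 edges, and every Type 1 edge contains a vertex of $W$, so every $r$-subset of $C$ is an edge of $H^*_t$ lying in $V(H)$ with no vertex of $W$, and hence is an edge of $H$. Thus $C$ is a clique of $H$ and $|C|\leq\omega(H)$.

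The crux is the case $C\cap W\neq\emptyset$, where I would show $C\subseteq E^*\cup W$, giving $|C|\leq r+t$. I may assume $|C|\geq r$ (otherwise $|C|\leq r-1<r+t$ trivially, as $t\geq 1$). Fix $w\in C\cap W$ and suppose for contradiction that $C$ contains some $z\notin E^*\cup W$ (so $z\in V(H)\setminus E^*$). Since $|C|\geq r$, I can extend $\{w,z\}$ to an $r$-subset $K\subseteq C$ containing both. As $C$ is a clique, $K$ is an edge of $H^*_t$; but $w\notin V(H)$ rules out $K$ being an edge of $H$, while $z\notin E^*\cup W$ rules out $K$ being a Type 1 edge --- and there are no Type 2 edges. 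This contradiction forces $C\subseteq E^*\cup W$, whence $|C|\leq r+t$.

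The main obstacle is precisely this last step: establishing that a clique using even a single new vertex cannot reach any vertex outside $E^*\cup W$. The structural fact that makes it work is that all edges introduced by the $t$-star extension (the Type 1 edges) lie entirely inside $E^*\cup W$, so the only edges available to a new vertex $w$ are confined to $E^*\cup W$. Once this is secured, combining the two cases gives $\omega(H^*_t)\leq\max\{\omega(H),r+t\}$, which together with the lower bound yields the claimed equality.
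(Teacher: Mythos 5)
Your proof is correct and follows essentially the same route as the paper: exhibit $E^*\cup W$ as an $(r+t)$-clique for the lower bound, and for the upper bound use the fact that every new edge lies inside $E^*\cup W$, so a clique meeting $W$ is trapped there. The paper compresses the upper bound into a single sentence; your case analysis just makes the same observation explicit.
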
	
\begin{proof}
Consider the construction of $H^*_t$:  we take an edge $E^*$ composed of $r$ vertices, a set $W$ of $t$ vertices, and form all possible $r$-subsets of  $E^* \cup W$ as edges.  So this forms a clique $K$ with $r+t$ vertices.  Since $K \subset V(H)$, $\omega(H^*_t) \geq \omega(K) \geq r+t$.  Also, since no other vertex outside $E^*$ forms an edge with the vertices of $W$, then $\omega(H^*_t) = \max\{ \omega(H), r+t \}$.

\end{proof}

Now consider a $\sigma$-hypergraph $H=H(n,r,q \mid \sigma)$, with $2 \leq s(\sigma) \leq r-1$.  By Theorem \ref{rect_sigma}, $\omega(H) \leq r+1$. Hence, if we  apply the $t$-star extension, starting with $H=H(n,r,q \mid \sigma)$, recursively for any number of times, the clique number will become $r+t$ after the first extension, and remain fixed.  In particular, for a $1$-star extension, $\omega(H^*_1)=r+1$ however many times the extension is applied.  Recall that the chromatic number also remains unchanged by Theorem \ref{chrom_no}.

\section{Conclusion}

We have presented an extension of $r$-uniform hypergraphs which preserves the chromatic number and the $(2,2)$-spectrum.  The clique number also remains stable when this extension is applied repeatedly.  The most natural next step, in our view, would be devising an extension which preserves more general constrained colouring spectra, that is, $(\alpha,\beta)$-spectra with $2 \leq \alpha \leq \beta \leq r-1$. Even finding such an extension which works for $\sigma$-hypergraphs would be a significant step forward.

\bibliographystyle{plain}
\bibliography{clz2bib}

\begin{thebibliography}{10}

\bibitem{andrews1998theory}
G.~E. Andrews.
\newblock {\em The theory of partitions}, volume~2.
\newblock Cambridge University Press, 1998.

\bibitem{bujtastuz09}
C.~Bujt{\'a}s and Z.~Tuza.
\newblock Color-bounded hypergraphs, {I}: {G}eneral results.
\newblock {\em Discrete Mathematics}, 309(15):4890--4902, 2009.

\bibitem{CaroLauri14}
Y.~Caro and J.~Lauri.
\newblock Non-monochromatic non-rainbow colourings of $\sigma$-hypergraphs.
\newblock {\em Discrete Mathematics}, 318(0):96 -- 104, 2014.

\bibitem{CLZ1}
Y.~Caro, J.~Lauri, and C.~Zarb.
\newblock Constrained colouring and $\sigma$-hypergraphs.
\newblock 2014.
\newblock submitted.

\bibitem{colburn99}
C.J. Colburn, J.H. Dinitz, and A.~Rosa.
\newblock Bicolouring {S}teiner triple systems.
\newblock {\em Electronic J. Combin.}, 6:R25, 1999.

\bibitem{erdos1975problems}
P.~Erdos and L.~Lov{\'a}sz.
\newblock Problems and results on 3-chromatic hypergraphs and some related
  questions.
\newblock {\em Infinite and finite sets}, 10:609--627, 1975.

\bibitem{Gebauer20131483}
H.~Gebauer.
\newblock On the construction of 3-chromatic hypergraphs with few edges.
\newblock {\em Journal of Combinatorial Theory, Series A}, 120(7):1483 -- 1490,
  2013.

\bibitem{Gionfriddo04}
L.~Gionfriddo.
\newblock Voloshin's colourings of ${P}_3$-designs.
\newblock {\em Discrete Mathematics}, 275(1–3):137 -- 149, 2004.

\bibitem{gionfriddo04bicolouring}
M.~Gionfriddo, L.~Milazzo, A.~Rosa, and V.I. Voloshin.
\newblock Bicolouring steiner systems $s(2,4,v)$.
\newblock {\em Discrete mathematics}, 283(1):249--253, 2004.

\bibitem{hardy1979introduction}
G.~H. Hardy and E.~M. Wright.
\newblock {\em An introduction to the theory of numbers}.
\newblock Oxford University Press, 1979.

\bibitem{jiang1999}
T.~Jiang, D.~Mubayi, Z.~Tuza, V.~I. Voloshin, and D.~West.
\newblock Chromatic spectrum is broken.
\newblock {\em Electronic Notes in Discrete Mathematics}, 3:86--89, 1999.

\bibitem{kostochka2010}
A.~V. Kostochka and V.~R{\"o}dl.
\newblock Constructions of sparse uniform hypergraphs with high chromatic
  number.
\newblock {\em Random Structures \& Algorithms}, 36(1):46--56, 2010.

\bibitem{voloshin02}
V.~I. Voloshin.
\newblock {\em Coloring mixed hypergraphs: theory, algorithms and
  applications}, volume~17 of {\em Fields Institute Monograph}.
\newblock American Mathematical Society, 2002.

\bibitem{voloshin1999pseudo}
V.~I. Voloshin and H.~Zhou.
\newblock Pseudo-chordal mixed hypergraphs.
\newblock {\em Discrete mathematics}, 202(1):239--248, 1999.

\end{thebibliography}

\end{document}